\newtheorem{defn}{Definiton}
\newtheorem{thm}{Theorem}
\newtheorem{exmp}{Example}
\newtheorem{prop}{Proposition}
\newtheorem{cor}{Corollary}
\newtheorem{lem}{Lemma}
\begin{document}



\title{\Large On a generalization of compact and connected spaces}
\author{Neboj\v sa Elez\thanks {University of East Sarajevo {\tt nebojsa.elez@ff.ues.rs.ba}}
\and {Ognjen Papaz\thanks{University of East Sarajevo {\tt ognjen.papaz@ff.ues.rs.ba}}}}
\date{}

\maketitle

\begin{abstract}
In this paper we will give two different natural generalizations of compact spaces and connected spaces simultaneously. We will show that these generalizations coincide for the subspaces of the real line and that they  differ for subspaces of $\mathbb R^2$.
\end{abstract}








\section{Introduction}
\label{}




Classes of compact and connected spaces, although not very related, share some similar properties. They are closed with respect to taking continuous image and topological product. 

Intersection of class of compact and class of connected spaces, the class of continua, is well known. Class of continua is closed with respect to taking continuous image and topological product. 

Union of class of compact and class of connected spaces is not particularly interesting. 

The smallest class of spaces that contains both compact and connected spaces and that is closed with respect to taking continuous image and topological product is class of spaces that are continuous image of product of compact and connected space. The following subspace of the real line is an example of such space
\[\{0\}\cup \bigcup_{n=1}^{+\infty}\left(\frac{1}{n+1},\frac{1}{n}\right).\] 
This space has infinitely many connected components but it doesn't have infinitely many open connected components. This property can inspire the following definition.
 
\begin{defn} We say that a topological space is a GCC space (Generalized compact and connected space) if it cannot be represented as a union of infinitely many of its disjoint non-empty open subsets.
\end{defn}
In the first part of the paper we will give characterizations of the class of GCC spaces. 

It's easy to see that continuous image of a GCC space is a GCC space. It turns out that product of two GCC spaces doesn't have to be a GCC space. 

Similar class of spaces, having property that every clopen cover of a space has finite subcover, was introduced in \cite{sterp}. 

In the second part we will consider the following subclass of GCC spaces that is closed with respect to taking continuous image and topological product.

\begin{defn} We say that a topological space is a CCC space if it is a \textbf{c}ontinuous image of a product of \textbf{c}ompact and \textbf{c}onnected space.
\end{defn}

As we have mentioned earlier, the class of CCC spaces is a minimal class of spaces that contains all compact and connected spaces and that is closed with respect to product and taking continuous image.

We will give intrinsic property that characterizes CCC spaces. Namely, we will show that a space $X$ is a CCC space if and only if it has compact subspace that intersects every connected component of $X$.

In the last part of the paper we will show that these two classes are equal on the real line, and we will give an example of a subspace of $\mathbb R^2$ which is a GCC space but not a CCC space.

\section{Main results}

\begin{thm} For a topological space $X$, the following conditions are equivalent:\\

(1) $X$ is a GCC space;

(2) Every disjoint open cover of $X$ has a finite subcover;

(3) Every countable clopen cover of $X$ has a finite subcover;

(4) Every decreasing sequence of non-empty clopen subsets of $X$ has a non-empty intersection;

(5) There is no continuous function from $X$ onto $\mathbb{N}$.

(6) $X$ is not a direct sum of infinitely many of its nonempty subspaces.
\end{thm}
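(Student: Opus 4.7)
The plan is to close the ring of implications $(1)\Rightarrow(2)\Rightarrow(3)\Rightarrow(4)\Rightarrow(5)\Rightarrow(6)\Rightarrow(1)$. All six conditions are slight reformulations of the impossibility of splitting $X$ into infinitely many topologically independent clopen pieces, so every arrow amounts to a short bookkeeping step; no single implication is deep.

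The arrow $(1)\Rightarrow(2)$ is essentially tautological, because in a disjoint cover by nonempty open sets every member is indispensable, so the existence of a finite subcover forces the whole cover to be finite. For $(2)\Rightarrow(3)$ I would disjointify a countable clopen cover $\{V_n\}$ by $W_n=V_n\setminus(V_1\cup\cdots\cup V_{n-1})$; the nonempty $W_n$ form a disjoint open cover, (2) makes it finite, and this immediately yields a finite subcover of $\{V_n\}$. Then $(3)\Rightarrow(4)$ is the standard complementation trick: a decreasing chain $(F_n)$ of nonempty clopen sets with $\bigcap_n F_n=\emptyset$ gives a countable clopen cover $\{X\setminus F_n\}$ with no finite subcover.

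The last three arrows are all contrapositive. For $(4)\Rightarrow(5)$, a continuous surjection $f\colon X\to\mathbb{N}$ (with $\mathbb{N}$ discrete) yields the decreasing chain $F_n=f^{-1}\bigl(\{n,n+1,\dots\}\bigr)$ of nonempty clopen sets whose intersection is empty. For $(5)\Rightarrow(6)$, an infinite direct sum $X=\bigsqcup_{i\in I}X_i$ lets me pick distinct $i_1,i_2,\dots$ and send $x\in X_{i_n}$ to $n$ and all remaining points of $X$ to $1$; each level set is a union of whole summands, so the map is continuous, and surjectivity onto $\mathbb{N}$ is automatic since every $X_{i_n}$ is nonempty. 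Finally $(6)\Rightarrow(1)$: if $X=\bigcup_{i\in I}U_i$ is an infinite disjoint nonempty open cover, then each $U_i$ is the complement of the open union of the others, hence clopen, and the decomposition is precisely a direct sum of infinitely many nonempty subspaces.

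I do not expect any genuine obstacle. The two spots deserving a moment of care are $(2)\Rightarrow(3)$, where one must check that the disjointified pieces are still clopen so that they really form a disjoint open cover to which (2) applies, and $(5)\Rightarrow(6)$, where the constructed map to $\mathbb{N}$ has to be simultaneously continuous and surjective, handled by sending the "leftover" points to a value already hit by the enumeration.
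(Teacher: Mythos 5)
Your proposal is correct and follows essentially the same cycle of implications, with the same disjointification in $(2)\Rightarrow(3)$, the same complementation trick in $(3)\Rightarrow(4)$, and the same preimage chain in $(4)\Rightarrow(5)$ as the paper. The only (welcome) refinement is in $(5)\Rightarrow(6)$, where you handle an arbitrary infinite index set by selecting countably many summands and collapsing the rest, whereas the paper tacitly assumes a countable sequence of summands.
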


\begin{proof} $(1)\Rightarrow (2)$. Let $(U_i)_{i\in I}$ be a disjoint open cover of $X$. Since $X$ is GCC space, it follows that all but finitely many of the sets $U_i$ are empty. Hence, $(U_i)_{i\in I}$ has finite subcover.

$(2)\Rightarrow (3)$. Let $(U_n)_{n\in\mathbb{N}}$ be a clopen cover of $X$. For every $n\in\mathbb{N}$ let $V_n=U_n\setminus\bigcup_{k<n}U_k$. Family $(V_n)_{n\in\mathbb{N}}$ is a disjoint open cover of $X$, hence it has a finite subcover. Since $(V_n)_{n\in\mathbb{N}}$ is a refinement of the cover $(U_n)_{n\in\mathbb{N}}$, it follows that $(U_n)_{n\in\mathbb{N}}$ also has a finite subcover.

$(3)\Rightarrow (4)$. Let $\{F_n\}_{n=1}^{+\infty}$ be a decreasing sequence of non-empty clopen subsets of $X$. If $\bigcap_{n=1}^{+\infty}F_n=\emptyset$, then the family $(X\setminus F_n)_{n\in\mathbb{N}}$ is a countable clopen cover of $X$ that has no finite subcover, which is impossible by the assumption of the theorem. Therefore, $\bigcap_{n=1}^{+\infty}F_n\neq\emptyset$. 

$(4)\Rightarrow (5)$. Assume to the contrary that there exists a continuous function $f$ from $X$ onto $\mathbb{N}$. For every $n\in\mathbb{N}$ let $F_n=\mathbb{N}\setminus\{1,2,\ldots,n\}$. Since each of the sets $F_n$ is clopen in $\mathbb{N}$, it follows that $\{f^{-1}(F_n)\}_{n=1}^{+\infty}$ is a decreasing sequence of non-empty clopen subsets of $X$. But, $\bigcap_{n=1}^{+\infty}f^{-1}(F_n)=f^{-1}(\bigcap_{n=1}^{+\infty}F_n)=f^{-1}(\emptyset)=\emptyset$, which is a contradiction. 

$(5)\Rightarrow (6)$. Let us assume that $X$ is the direct sum of the sequence $\{X_n\}_{n=1}^{+\infty}$ of disjoint non-empty subspaces of $X$. The function $f:X\to\mathbb{N}$, defined with $f(X_n)=\{n\}$, is a continuous function from $X$ onto $\mathbb{N}$. Thus, we have a contradiction. 

$(6)\Rightarrow (1)$. This implication is obviously true.
\end{proof}

One readily proves the following proposition.

\begin{prop} Each of the following statements holds.\\

1. All countably compact, compact and connected spaces are GCC spaces. 

2. An image of a GCC space under a continuous mapping is a GCC space. 

3. A clopen subspace of a GCC space is a GCC space. 

4. A space $X$ is a GCC space if it has a dense GCC subspace. 

5. A space $X$ is a GCC space if it can be represented as a union of finitely many of its GCC subspaces. 

6. If $(X,\tau)$ is a GCC space and if $\sigma\subseteq\tau$, then $(X,\sigma)$ is also a $GCC$ space.
\end{prop}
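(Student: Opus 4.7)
The plan is to derive each of the six items from whichever equivalent formulation of GCC in Theorem~1 is most convenient, rather than unpacking the definition every time.

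For item~1, compactness yields (1) directly: an infinite disjoint cover by non-empty open sets would be an open cover with no finite subcover. Countable compactness yields (3), since a countable clopen cover is in particular a countable open cover. Connectedness yields (1) as well, because being a disjoint union of two or more non-empty open sets is precisely the negation of connectedness. Item~2 is a one-line invocation of (5): if $f \colon X \to Y$ is a continuous surjection and $g \colon f(X) \to \mathbb N$ were continuous and surjective, then $g \circ f$ would contradict $X$ being GCC. Item~6 follows from (1) immediately, because a disjoint open cover in the coarser topology $\sigma$ is automatically a disjoint open cover in the finer topology $\tau$.

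For item~3, if $A$ is clopen in $X$ and $A$ decomposes as an infinite disjoint union of non-empty subsets open in $A$ (hence open in $X$, since $A$ is open), I would adjoin $X \setminus A$ to produce a disjoint open cover of $X$ into infinitely many non-empty pieces, contradicting (1) for $X$. For item~4, given a dense GCC subspace $Y \subseteq X$ and a hypothetical partition $X = \bigsqcup_{n \in \mathbb N} U_n$ into non-empty open sets, density of $Y$ makes each $Y \cap U_n$ non-empty and relatively open in $Y$, forcing the same contradiction inside $Y$.

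Item~5 is the step worth thinking about most carefully. I would invoke (5): given $X = X_1 \cup \cdots \cup X_n$ with each $X_i$ a GCC subspace and a hypothetical continuous surjection $f \colon X \to \mathbb N$, each restriction $f|_{X_i}$ is continuous, so $f(X_i)$ is a continuous image of a GCC space, hence a GCC subspace of $\mathbb N$ by item~2. Since $\mathbb N$ is discrete, so is every subspace, and a discrete GCC space must be finite (apply (1) to its partition into singletons). Thus $\mathbb N = \bigcup_{i=1}^n f(X_i)$ would be a finite set, a contradiction. No substantial obstacle arises overall; the only mildly delicate choice is in this last item, where attacking (1) head-on would require reconstructing a partition argument, whereas routing through characterization (5) gives the cleanest route.
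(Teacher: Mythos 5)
Your proof is correct; the paper itself offers no argument here (it simply says the proposition is readily proved), and your item-by-item verification via the equivalent conditions of Theorem~1 is exactly the routine check intended. All six arguments are sound, including the only mildly nontrivial one (item~5), where reducing to the fact that a discrete GCC subspace of $\mathbb{N}$ must be finite cleanly closes the argument.
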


The CLP-compact spaces, that were introduced in \cite{sterp}, are defined by having the property that every clopen cover has finite subcover. Every CLP-compact space is a GCC space.  In the realm of second countable spaces every GCC space is CLP-compact. The following example shows that class of GCC spaces is strictly larger then the class of CLP-compact spaces.

\begin{exmp} First uncountable ordinal $\omega_1$ is a GCC space that is not CLP-compact. Indeed, $\omega_1$ is countably compact so it is a GCC space. And $\omega_1$ is not CLP-compact since clopen cover of $\omega_1$ consisting of the sets of the form $[0,a], a<\omega_1$, has no finite subcover.
\end{exmp}

The following example shows that product of two GCC spaces doesn't need to be a GCC space.

\begin{exmp} Famous example of Novak \cite[3.10.19 Example]{eng} exhibits two countably compact subspaces $X$ and $Y$ of $\beta\mathbb N$ whose product $X\times Y$ contains $\{(1,1),(2,2),\ldots\}$ as an open, closed and discrete subset. Thus, $X$ and $Y$ are GCC spaces and their product is not a GCC space.
\end{exmp}

We will now consider the class of $CCC$ spaces.

\begin{prop} Every CCC space is a GCC space.
\begin{proof}

Let $X$ be a CCC space and let $X=f(K\times C)$, where $f$ is continuous, $K$ is compact and $C$ is connected. To prove that $X$ is a GCC space it is enough to prove that $K\times C$ is a GCC space. Let $(U_i)_{i\in I}$ be a disjoint open cover of $K\times C$. We pick some element $c\in C$. Subspace $K\times\{c\}$ of $K\times C$ is compact so there is a finite subfamily $\{U_{i_1},U_{i_2},\ldots,U_{i_n}\}$ of $(U_i)_{i\in I}$ that covers $K\times\{c\}$. We will prove that $\{U_{i_1},U_{i_2},\ldots,U_{i_n}\}$ covers $K\times C$. Let $k\in K$. Subspace $\{k\}\times C$ of $K\times C$ is connected and there is some $U_{i_k}\in\{U_{i_1},U_{i_2},\ldots,U_{i_n}\}$ such that $(\{k\}\times C)\cap U_{i_k}\neq\emptyset$. Since $U_{i_k}$ is clopen set we have that $\{k\}\times C\subseteq U_{i_k}$. 
\end{proof}
\end{prop}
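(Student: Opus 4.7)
The plan is to exploit the characterization of GCC spaces via disjoint open covers (condition~(2) of Theorem~1), together with the fact that a continuous image of a GCC space is again GCC (Proposition~1, item~2). Writing $X = f(K \times C)$ with $K$ compact, $C$ connected, and $f$ continuous, everything reduces to showing that the product $K \times C$ is itself a GCC space.

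So let $(U_i)_{i \in I}$ be a disjoint open cover of $K \times C$. A small but pivotal observation is that each $U_i$ is automatically clopen: its complement equals $\bigcup_{j \neq i} U_j$, which is open. I would then fix any $c \in C$ and invoke compactness of the slice $K \times \{c\}$ (homeomorphic to $K$) to extract a finite subfamily $U_{i_1}, \dots, U_{i_n}$ that already covers $K \times \{c\}$.

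The heart of the argument is a tube-style step that uses connectedness of the other factor. For each $k \in K$, the slice $\{k\} \times C$ is connected and meets some $U_{i_j}$ at the point $(k,c)$; since $U_{i_j}$ is clopen, that intersection is a nonempty clopen subset of a connected space, forcing $\{k\} \times C \subseteq U_{i_j}$. Letting $k$ range over $K$ then yields $K \times C = \bigcup_{j=1}^n U_{i_j}$, which is the desired finite subcover.

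The main conceptual hurdle is spotting that compactness and connectedness must be exploited on different factors and stitched together by the clopenness of the $U_i$'s; once that observation is in place, the remaining steps are routine and no delicate estimate is required.
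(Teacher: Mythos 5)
Your proof is correct and follows essentially the same route as the paper: reduce to $K\times C$ via the fact that continuous images of GCC spaces are GCC, cover the compact slice $K\times\{c\}$ by finitely many of the disjoint open sets, and then use clopenness of each $U_i$ together with connectedness of the slices $\{k\}\times C$ to show the finite subfamily covers everything. Your explicit remark that each $U_i$ in a disjoint open cover is automatically clopen is a small clarification the paper leaves implicit, but the argument is identical.
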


\begin{prop} The class of CCC spaces is closed with respect to product and taking continuous image.

\begin{proof} One easily sees that a continuous image of a CCC space is a CCC space. We prove that the class of CCC spaces is closed with respect to product. Let $(X_i)_{i\in I}$ be a family of CCC spaces and let each $X_i=f_i(K_i\times C_i)$ where $f_i$ is continuous, $K_i$ is compact, and $C_i$ is connected. We define a mapping 
\[f:\prod_{i\in I}(K_i\times C_i)\to\prod_{i\in I}X_i\]
in the following way
\[f(((k_i,c_i)_{i\in I}))=(f_i(k_i,c_i))_{i\in I}.\]
The mapping $f$ is a continuous surjection and since 
\[\prod_{i\in I}(K_i\times C_i)\cong\left(\prod_{i\in I}K_i\right)\times\left(\prod_{i\in I}C_i\right),\]
we have that $\prod_{i\in I}X_i$ is a CCC space.
\end{proof}
\end{prop}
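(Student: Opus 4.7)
The plan is to verify the two closure properties separately. For continuous images, the argument is essentially a composition: if $X=f(K\times C)$ is a CCC space and $g\colon X\to Y$ is a continuous surjection, then $g\circ f\colon K\times C\to Y$ is also a continuous surjection, exhibiting $Y$ as a continuous image of a product of a compact space and a connected space. This step should be immediate from the definition.

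For closure under products, suppose $(X_i)_{i\in I}$ is a family of CCC spaces, so for each $i$ we have a continuous surjection $f_i\colon K_i\times C_i\to X_i$ with $K_i$ compact and $C_i$ connected. The natural construction is to take the product map $f=\prod_{i\in I}f_i$ from $\prod_{i\in I}(K_i\times C_i)$ onto $\prod_{i\in I}X_i$, sending $((k_i,c_i))_{i\in I}$ to $(f_i(k_i,c_i))_{i\in I}$. This is continuous (each component map is continuous) and surjective (each $f_i$ is surjective), so it remains to recognize the domain as a product of a compact space and a connected space.

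The key observation is the canonical homeomorphism $\prod_{i\in I}(K_i\times C_i)\cong\bigl(\prod_{i\in I}K_i\bigr)\times\bigl(\prod_{i\in I}C_i\bigr)$, obtained by rearranging coordinates. By Tychonoff's theorem, $\prod_{i\in I}K_i$ is compact, and since any product of connected spaces is connected, $\prod_{i\in I}C_i$ is connected. Hence $\prod_{i\in I}X_i$ is a continuous image of a product of a compact space and a connected space, so it is a CCC space.

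I do not anticipate a genuine obstacle: the entire argument rests on three standard facts (composition of continuous surjections, Tychonoff's theorem, and connectedness of products of connected spaces) together with the obvious reshuffling of binary products inside an arbitrary product, and the definition of CCC is tailored precisely so that this assembly works on the nose.
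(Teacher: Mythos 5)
Your proposal is correct and follows essentially the same route as the paper: the composition argument for continuous images, the product map $\prod_i f_i$, and the coordinate-rearrangement homeomorphism $\prod_i(K_i\times C_i)\cong(\prod_i K_i)\times(\prod_i C_i)$ together with Tychonoff and connectedness of products. Your write-up merely makes explicit the standard facts the paper leaves implicit.
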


Now we give an intrinsic characterization of $CCC$ spaces. 

\begin{thm}

A space $X$ is a CCC space if and only if $X$ has a compact subspace that intersects every connected component of $X$. 

\begin{proof} Let $X$ be a CCC space $X=f(K\times C)$ where $f$ is continuous, $K$ is compact and $C$ is connected. We pick some element $c\in C$. Then $K\times\{c\}$ is a compact subspace of $K\times C$ that intersects every connected component of $K\times C$ and $f(K\times\{c\})$ is a compact subspace of $X$ that intersects every connected component of $X$.

Now we prove the converse. Let $K$ be a compact subspace of $X$ that intersects every connected component of $X$. 

For every $k\in K$ let $C_k$ be connected component of $X$ that contains $k$. We will regard each $C_k$ as a based space $(C_k,k)$. Let
\[C=\bigvee_{k\in K} C_k\]
be the wedge sum of the spaces $(C_k,k)$. The space $C$ is connected because every $C_k$ is connected. 

Let $A(K)=K\cup (K\times\{1\})$ be Alexandroff duplicate of $K$ (see \cite{ad}). The family 
\[\{U\cup((U\setminus\{x\})\times\{1\}):U\ \text{is open in}\ K, x\in U\}\cup\{(x,1):x\in K\}\]
is subbasis of topology of $A(K)$.
Since $K$ is compact space, $A(K)$ is also compact. We will construct a continuous surjection 
\[f:A(K)\times C\to X.\]

For every $k,k'\in K$ we define $g_{k,k'}:C_{k'}\to X$ in the following way
\[g_{k,k'}(x)=
\begin{cases}
x, k=k'\\
k, k'\neq k.
\end{cases}\]
Each $g_{k,k'}$ is continuous map and sends base point of $C_{k'}$ to $k$. Thus, there exists unique continuous map 
\[g_k:C\to X,\]
such that $ g_k\circ i_{k'}=g_{k,k'}$ for every $k'$, where $i_{k'}:C_{k'}\to C$ is a natural embedding.

We now define $f$. Let $k\in K$. We define
\[f_{|\{(k,1)\}\times C}=g_k,\]
and 
\[f_{|\{k\}\times C}=\text{constant map with the value}\ k.\]
Its easy to see that $f$ is a surjection. Now we prove the continuity of $f$. Since each of the sets $\{(k,1)\}\times C$ is open in $A(K)\times C$ and each $g_k$ is continuous we only need to check the continuity of $f$ at an arbitrary point $(k,c)\in K\times C$. 

Let $k_0\in K$ and $c\in i_{k_1}(C_{k_1})$. We have $f(k_0,c)=k_0$. Let $V$ be an open neighborhood of $k_0$ in $X$. For every $k\in K$ let 
\[U_k=
\begin{cases}
i_k(V\cap C_k), k\in V\setminus \{k_1\},\\
i_k(C_k), k\in (K\setminus V)\cup\{k_1\}.
\end{cases}
\]
Let $U=\bigcup_{k\in K}U_k$. We have that $U$ is an open neighborhood od $c$ in $C$. 

For every $k\in G=V\setminus \{k_1\}$ we have
\[g_k(U_{k'})=g_k(i_{k'}(V\cap C_{k'}))=g_{k,k'}(V\cap C_{k'})\subseteq V,\]
for $k'\in G$, and
\[g_k(U_{k'})=g_k(i_{k'}(C_{k'}))=g_{k,k'}(C_{k'})=\{k\},\]
for $k'\in (K\setminus V)\cup\{k_1\}$.

Thus, for every $k\in G$ we have $g_k(U)\subseteq V$.

Let 
\[V'=V\cup (G\times{1}).\]
We have that $V'$ is an open neighborhood of $k_0$ in $A(K)$.

Now, we have that
\begin{align*}
f(V'\times U)&=f((V\times U)\cup ((G\times{1})\times U))=\\
&f(V\times U)\cup f((G\times\{1\})\times U)=\\
&V\cup \bigcup_{(k,1)\in G\times\{1\}}f((k,1)\times U)=\\
&V\cup \bigcup_{k\in G}g_k(U)\subseteq V.
\end{align*}

\end{proof}

\end{thm}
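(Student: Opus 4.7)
The forward direction should be the easy one. Given $X=f(K\times C)$ with $K$ compact, $C$ connected and $f$ continuous, I would fix any $c_0\in C$ and let $L=f(K\times\{c_0\})$. Then $L$ is compact as the continuous image of the compact set $K\times\{c_0\}$. To see that $L$ meets every connected component of $X$, note that for each $k\in K$ the slice $\{k\}\times C$ is connected and homeomorphic to $C$, so its image $f(\{k\}\times C)$ lies in a single component of $X$ and contains the point $f(k,c_0)\in L$. Since every point of $X$ belongs to some such image, every component of $X$ meets $L$.

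For the converse, given a compact $K\subseteq X$ meeting every component, the plan is to build a continuous surjection from $\widetilde K\times\widetilde C$ onto $X$ with $\widetilde K$ compact and $\widetilde C$ connected, where the first factor ``selects'' a component and the second factor ``unfolds'' into it. The natural connected factor is the wedge sum $\widetilde C=\bigvee_{k\in K}C_k$ of the components, pointed at their chosen representatives in $K$; this is connected because all summands share the common wedge point. For the compact factor, the naive choice $\widetilde K=K$ together with a map sending $(k,c)$ to $c$ when $c$ lies in the $C_k$-summand and to $k$ otherwise fails to be continuous: near a point $(k_0,c)$ with $c\in C_{k_1}$ and $k_1\neq k_0$, moving the first coordinate forces the value to jump from $c$ to $k_0$.

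The repair is to replace $K$ by its Alexandroff duplicate $A(K)=K\sqcup(K\times\{1\})$, which remains compact and in which each duplicate point $(k,1)$ is isolated while the original copy of $K$ keeps its topology. On each clopen fiber $\{(k,1)\}\times\widetilde C$ I can freely place the ``unfolding'' map that sends $c\in C_{k'}$ to $c$ when $k'=k$ and to $k$ otherwise; on the remaining fibers $\{k\}\times\widetilde C$ I force the constant value $k$ so that limits behave. Surjectivity is immediate (every $x\in X$ lies in some $C_k$ and is hit by the $(k,1)$-fiber), and continuity at the isolated fibers is automatic. The main obstacle, as anticipated, is verifying continuity at the non-isolated points $(k_0,c)\in K\times\widetilde C$: given a neighborhood $V$ of $k_0$ in $X$, one must exhibit an open neighborhood of $c$ in $\widetilde C$ whose intersection with each summand $C_k$ is either $V\cap C_k$ for $k$ inside a chosen neighborhood of $k_0$ or the entire $C_k$ for $k$ outside it. This is precisely the kind of neighborhood the wedge topology allows, since each $C_k$ is clopen in $\widetilde C$; combined with a correspondingly chosen basic open set in $A(K)$ (which glues a neighborhood of $k_0$ in $K$ to the isolated copies over that neighborhood), the composed image lands in $V$, completing the verification.
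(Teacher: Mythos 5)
Your proposal is correct and follows essentially the same route as the paper: the identical forward argument via $f(K\times\{c_0\})$, and for the converse the same construction with the wedge sum $\bigvee_{k\in K}C_k$ as the connected factor, the Alexandroff duplicate $A(K)$ as the compact factor, the same ``unfold on the isolated copy, collapse on the original copy'' map, and the same continuity check via wedge-open sets meeting each summand $C_k$ in either $V\cap C_k$ or all of $C_k$. Your remark that the naive choice of $K$ itself as the compact factor fails for continuity is a useful motivation that the paper leaves implicit, but it does not change the argument.
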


The following example is an example of a subspace of $\mathbb R^2$ which is a GCC space but not a CCC space.

\begin{exmp}
For every  $n\in\mathbb N$ let $A_n=\bigcup_{m\geq n+1}([0,1]\times\{1/n+1/m\})$ and $x_n=(1,1/n)$. Let
\[X=\{x_n:n\in\mathbb N\}\cup\left(\bigcup_{n\in\mathbb N}A_{n}\right)\cup\{\mathbf{0}\}.\]

We will first show that $X$ is not a CCC space. If $K$ is compact subspace of $X$ that intersects every connected component of $X$ then $\{x_n:n\in\mathbb N\}\subseteq K$. But $\{x_n:n\in\mathbb N\}$ is infinite closed and discrete subspace of $X$, and we have a contradiction.  

We will now prove that $X$ is a GCC space. Let $(U_i)_{i\in i}$ be a clopen cover of $X$. Let $\mathbf{0}\in U_{i_0}$. There exists $n_0\in\mathbb N$ such that $A_n\subseteq U_{i_0}$ for all $n>n_0$. We have that $x_n\in\overline{A_n}$ for every $n\in\mathbb N$, thus $x_n\in U_{i_0}$ for all $n>n_0$. Let
\[x_1\in U_{i_1},x_2\in U_{i_2},\ldots,x_{n_0}\in U_{i_{n_0}}.\]
Each of the sets $U_{i_k}$ contains all but finitely many connected components of $A_k$. Thus $\bigcup_{k=0}^{n_0}U_{i_k}$ contains all but finitely many connected components of $X$. From here we can conclude that $(U_i)_{i\in I}$ has finite subcover of $X$.
\end{exmp}

We note that every connected component of the space $X$ from the previous example is a quasicomponent. So, $X$ is also an example of a GCC subspace of $\mathbb R^2$ that doesn't have compact subspace that intersects every quasicomponent of $X$.\\

Now we restrict our attention to the subspaces of $\mathbb R$.

\begin{thm} A space $X\subseteq\mathbb{R}$ is a GCC space if and only if every monotone sequence $\{x_n\}_{n=1}^{+\infty}$ of real numbers, such that
\[x_{2n}\in X\ \text{and}\ x_{2n-1}\in\mathbb{R}\setminus X\ \text{for every}\ n\in\mathbb{N},\]
is convergent and $\lim_{n\to+\infty}x_n\in X$.

\begin{proof} Let $X$ be a GCC space and let $\{x_n\}_{n=1}^{+\infty}$ be a monotone sequence of real numbers such that $x_{2n}\in X$ and $x_{2n-1}\in\mathbb{R}\setminus X$ for every $n\in\mathbb{N}$. Let us first prove that the $\lim_{n\to+\infty}x_n$ exists. If $\lim_{n\to+\infty}x_n$ does not exist then the sequence $\{x_n\}_{n=1}^{+\infty}$ is unbounded, and the family
\[\{(-\infty,x_1)\cap X\}\cup\{(x_{2n-1},x_{2n+1})\cap X:n\in\mathbb{N}\},\]
i.e. the family
\[\{(x_{2n-1},x_{2n+1})\cap X:n\in\mathbb{N}\}\cup\{(x_1,+\infty)\cap X\},\]
depending on whether the sequence $\{x_n\}_{n=1}^{+\infty}$ is increasing or decreasing, is a disjoint open cover of $X$ that has no finite subcover, which cannot exist because $X$ is a $GCC$ space. Thus, the $\lim_{n\to+\infty}x_n$ exists. Let $x=\lim_{n\to+\infty}x_n$. If $x\notin X$, then the family
\[\{(-\infty,x_1)\cap X\}\cup\{(x_{2n-1},x_{2n+1})\cap X:n\in\mathbb{N}\}\cup\{(x,+\infty)\cap X\},\]
i.e. the family
\[\{(-\infty,x)\cap X\}\cup\{(x_{2n-1},x_{2n+1})\cap X:n\in\mathbb{N}\}\cup\{(x_1,+\infty)\cap X\},\]
depending again on whether the sequence $\{x_n\}_{n=1}^{+\infty}$ is increasing or decreasing, is a disjoint open cover of $X$ that has no finite subcover. 

Now let us prove the converse. If $X$ is not a GCC space then there is a sequence $\{U_n\}_{n=1}^{+\infty}$ of disjoint non-empty open subsets of $X$ such that $X=\bigcup_{n=1}^{+\infty}U_n$. Let $\{a_n\}_{n=1}^{+\infty}$ be a sequence in $X$ such that $a_n\in U_n$ for every $n\in\mathbb{N}$ and let $\{a_{n_k}\}_{k=1}^{+\infty}$ be a monotone subsequence of the sequence $\{a_n\}_{n=1}^{+\infty}$. We will assume that $\{a_{n_k}\}_{k=1}^{+\infty}$ is increasing. For every $k\in\mathbb{N}$ we have $(a_{n_k},a_{n_{k+1}})\setminus X\neq\emptyset$, so there is an increasing sequence $\{b_n\}_{n=1}^{+\infty}$ such that $b_{2k}=a_{n_k}$ and $b_{2k+1}\in(a_{n_k},a_{n_{k+1}})\setminus X\subseteq\mathbb{R}\setminus X$ for every $k\in\mathbb{N}$. By the assumption of the theorem we have $\lim_{n\to+\infty}b_n\in X$, but that is impossible because every set $U_n$ contains at most one of the members of the sequence $\{b_n\}_{n=1}^{+\infty}$. We analogously reach a contradiction if $\{a_{n_k}\}_{k=1}^{+\infty}$ is decreasing. 
\end{proof}
\end{thm}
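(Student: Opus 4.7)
The plan is to prove each direction by contrapositive, leveraging the characterization from Theorem 1 that a space is GCC if and only if every disjoint open cover has a finite subcover. Throughout, open subintervals of $\mathbb{R}$ will serve as the main technical device: each one intersected with $X$ is relatively open in $X$, and disjoint such intervals produce disjoint open subsets of $X$.

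For the forward direction, I would assume $X$ is GCC and take a monotone sequence $\{x_n\}$ with $x_{2n}\in X$ and $x_{2n-1}\notin X$. Assume without loss of generality it is increasing. If the sequence were unbounded above, the family consisting of $(-\infty,x_1)\cap X$ together with $(x_{2n-1},x_{2n+1})\cap X$ for $n\in\mathbb{N}$ would be a disjoint open cover of $X$ (disjoint because the endpoints do not lie in $X$, and covering because every $y\in X$ with $y\ge x_1$ must fall into some such interval since the odd-indexed terms are not in $X$), with no finite subcover, contradicting GCC. So the limit $x=\lim x_n$ exists. If $x\notin X$, I would append $(x,+\infty)\cap X$ to the same family and again obtain a disjoint open cover with no finite subcover. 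Hence $x\in X$.

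For the converse, assume the sequential criterion holds but $X$ is not GCC, so there exists a countable disjoint open cover $\{U_n\}$ of $X$ by nonempty sets. Pick $a_n\in U_n$, and extract a monotone subsequence $\{a_{n_k}\}$ (say increasing). The key observation is that each open interval $(a_{n_k},a_{n_{k+1}})$ must contain a point outside $X$: otherwise $[a_{n_k},a_{n_{k+1}}]\subseteq X$ would be a connected subspace of $X$ whose intersection with the disjoint open family $\{U_n\}$ would partition it into relatively clopen pieces, yet both $U_{n_k}$ and $U_{n_{k+1}}$ would contribute nonempty pieces, which is impossible. Pick such points and interleave them with the $a_{n_k}$, shifting indices so the resulting monotone sequence alternates between $\mathbb{R}\setminus X$ at odd indices and $X$ at even indices. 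By hypothesis, its limit exists and belongs to $X$, hence to some $U_m$. But $U_m$ is open, so it contains a whole tail of the even-indexed terms; since these terms lie in pairwise distinct $U_{n_k}$'s, no single $U_m$ can contain two of them, a contradiction.

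The step I expect to demand the most care is the interleaving in the converse: ensuring the parity condition on the constructed sequence holds requires a mild reindexing (for instance, discarding $a_{n_1}$ and starting with the first interpolated non-$X$ point), and the decreasing case must be noted as symmetric. Everything else reduces to elementary facts about monotone real sequences and the interplay between connectedness of real intervals and disjoint clopen covers.
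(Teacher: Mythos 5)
Your proof is correct and follows essentially the same route as the paper's: the forward direction builds the same disjoint open cover from the complementary intervals $(x_{2n-1},x_{2n+1})$, and the converse extracts a monotone subsequence of points from distinct cover elements and interleaves points of $\mathbb{R}\setminus X$. Your added justifications (connectedness of $[a_{n_k},a_{n_{k+1}}]$ forcing a gap, and the reindexing to fix the parity of the interleaved sequence) are details the paper glosses over, but the argument is the same.
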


\begin{cor} If $X$ is a GCC subspace of $\mathbb{R}$ and if $(C_i)_{i\in I}$ is a family of components of the space $X$, then $\overline{X}=\bigcup_{i\in I}\overline{C_i}$.
\end{cor}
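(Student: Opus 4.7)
The plan is to prove the two inclusions separately. The easy direction $\bigcup_{i\in I}\overline{C_i}\subseteq\overline{X}$ is immediate, since each component $C_i\subseteq X$ implies $\overline{C_i}\subseteq\overline{X}$. So the content is in showing every $x\in\overline{X}$ lies in $\overline{C_i}$ for some $i$. If $x\in X$, then $x$ belongs to its own component and there is nothing to prove, so the interesting case is $x\in\overline{X}\setminus X$.

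In that case I would pick a sequence $(a_k)\subseteq X$ converging to $x$, and after extracting a strictly monotone subsequence assume, say, $a_k\nearrow x$. I then split into two subcases. If some component $C_i$ contains infinitely many of the $a_k$, then $x\in\overline{C_i}$ and we are done. Otherwise, by passing to yet another subsequence, I may assume the components $C_{i_k}$ containing the $a_k$ are pairwise distinct. The key geometric remark is that since $[a_k,a_{k+1}]$ is a connected subset of $\mathbb R$ with endpoints in distinct components of $X$, it cannot lie entirely in $X$; hence there is a point $c_k\in(a_k,a_{k+1})\setminus X$ for each $k$.

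Interleaving these choices gives a strictly increasing sequence $b_{2n-1}=c_n$, $b_{2n}=a_{n+1}$ with $b_{2n}\in X$, $b_{2n-1}\in\mathbb R\setminus X$, and $b_n\to x$. Applying the previous theorem (the GCC characterization for subspaces of $\mathbb R$) forces $x=\lim b_n\in X$, contradicting $x\notin X$. Thus the second subcase cannot occur, and $x\in\overline{C_i}$ for some $i$, completing the proof.

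The main obstacle I expect is bookkeeping rather than ideas: one has to be careful that the interleaved sequence is genuinely monotone and has the parity required by Theorem 2 (points of $X$ on even indices, points outside $X$ on odd indices), and that the subsequence extractions do not destroy the property that consecutive terms lie in distinct components so that the separating points $c_k$ exist.
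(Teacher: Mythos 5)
Your proof is correct and follows essentially the same route as the paper's: take a strictly monotone sequence in $X$ converging to $x\in\overline{X}\setminus X$, pass to a subsequence whose terms lie in pairwise distinct components, interleave with separating points of $\mathbb{R}\setminus X$, and invoke the GCC characterization theorem for subspaces of $\mathbb{R}$ to reach a contradiction. If anything, your bookkeeping is a bit more careful than the paper's, whose interleaved sequence has the parity of that theorem reversed (points of $X$ at odd indices), a harmless but real slip that your indexing avoids.
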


\begin{proof}
Let $x\in\overline{X}\setminus X$ and let $\{x_n\}_{n=1}^{+\infty}$ be a strictly increasing sequence in $X$ such that $\lim_{n\to+\infty}x_n=x$. Suppose that $x\notin\bigcup_{i\in I}\overline{C_i}$. There exists a subsequence $\{x_{n_k}\}_{k=1}^{+\infty}$ of $\{x_n\}_{n=1}^{+\infty}$ and a sequence $\{y_k\}_{k=1}^{+\infty}$ such that 
\[y_k\in (x_{n_k},x_{n_{k+1}})\setminus X\]
for every $k\in\mathbb N$. Now, the sequence $\{z_k\}_{k=1}^{+\infty}$, defined with
\[z_{2k+1}=x_{n_k}\ \text{and}\ z_{2k}=y_k,\]
contradicts theorem 3.
\end{proof}

Similar argument can be used to prove the following corollaries.

\begin{cor} If $X$ is a GCC subspace of $\mathbb R$ then $\mathbb R\setminus X$ is locally connected. If $X$ is bounded subspace of $\mathbb R$ such that $X\setminus\mathbb R$ is locally connected then $X$ is a GCC space.
\end{cor}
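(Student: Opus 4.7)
\emph{Plan.} The plan is to reduce both implications to the sequential criterion of Theorem 3, in the same spirit as the preceding corollary. Note that a subspace of $\mathbb R$ is locally connected if and only if each of its components is open in it, so throughout I can work with the components of $\mathbb R\setminus X$ (reading $X\setminus\mathbb R$ in the statement as the evident typo for $\mathbb R\setminus X$).

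\emph{Forward direction.} Suppose $X$ is GCC and let $C$ be a component of $\mathbb R\setminus X$. Assume, for contradiction, that $C$ is not open in $\mathbb R\setminus X$: some $y\in C$ is an accumulation point of $(\mathbb R\setminus X)\setminus C$, so there exists a sequence $\{y_n\}\subseteq(\mathbb R\setminus X)\setminus C$ with $y_n\to y$. Passing to a subsequence I may assume $\{y_n\}$ is strictly monotone, say $y_n\nearrow y$ (the other case is symmetric). Since $y_n$ and $y$ lie in distinct components of $\mathbb R\setminus X$, the interval $[y_n,y]$ cannot be contained in $\mathbb R\setminus X$, so there is $x_n\in X$ with $y_n<x_n<y$. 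Thinning the indices inductively to guarantee $y_{n_{k+1}}>x_{n_k}$, I obtain a strictly increasing sequence
\[ y_{n_1}<x_{n_1}<y_{n_2}<x_{n_2}<\cdots\to y \]
whose odd-indexed terms lie in $\mathbb R\setminus X$ and whose even-indexed terms lie in $X$. Theorem 3 then forces $y\in X$, contradicting $y\in C\subseteq\mathbb R\setminus X$.

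\emph{Converse direction.} Assume $X$ is bounded and $\mathbb R\setminus X$ is locally connected, and let $\{x_n\}$ be a monotone sequence with $x_{2n}\in X$ and $x_{2n-1}\in\mathbb R\setminus X$. The even-indexed terms are bounded (because $X$ is), and together with monotonicity this bounds the whole sequence, so $L:=\lim_{n\to+\infty}x_n$ exists in $\mathbb R$. Suppose, toward a contradiction, that $L\notin X$. Local connectedness supplies a connected open neighborhood $N$ of $L$ in $\mathbb R\setminus X$; as a connected subset of $\mathbb R$, $N$ is an interval, and writing $N=U\cap(\mathbb R\setminus X)$ with $U$ open in $\mathbb R$, I can fix $\delta>0$ with $(L-\delta,L+\delta)\subseteq U$. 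For sufficiently large $n$ the odd-indexed terms $x_{2n-1},x_{2n+1}$ lie in $(L-\delta,L+\delta)\cap(\mathbb R\setminus X)\subseteq N$, and since $N$ is an interval it must also contain the intermediate term $x_{2n}$. But $x_{2n}\in X$ and $N\subseteq\mathbb R\setminus X$, a contradiction. Hence $L\in X$, and Theorem 3 yields that $X$ is GCC.

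\emph{Main obstacle.} The only mildly fiddly step is the index bookkeeping in the forward direction, where the $x_n$'s inserted between the $y_n$'s must be arranged into a genuinely monotone alternating sequence; the role of the boundedness hypothesis in the converse is likewise to guarantee convergence of the monotone sequence so that Theorem 3 can be invoked. Otherwise both halves are clean applications of Theorem 3.
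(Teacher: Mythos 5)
Your proof is correct, and it is exactly the argument the paper intends when it says "a similar argument can be used": in each direction you build a monotone sequence alternating between $X$ and $\mathbb R\setminus X$ and invoke the sequential criterion of Theorem 3, just as in the preceding corollary. You also correctly read ``$X\setminus\mathbb R$'' as the typo for $\mathbb R\setminus X$ and correctly use that a subspace of $\mathbb R$ is locally connected iff its components are open in it.
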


\begin{cor} Let $X$ be a GCC space and let $f$ be a continuous mapping form $X$ to $\mathbb{R}$. If $a=\sup f(X) (\inf f(X))\in\mathbb R$, then $a\in f(X)$ or $(a-\epsilon,a) ((a,a+\epsilon))\subseteq f(X)$ for some $\epsilon>0$. If $f$ is unbounded, then $f(X)$ contains an unbounded interval.
\end{cor}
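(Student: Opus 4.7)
The plan is to reduce to Theorem 3 applied to the GCC subspace $Y = f(X) \subseteq \mathbb R$, using part 2 of Proposition 1 (continuous images of GCC spaces are GCC). Both statements will then follow by constructing an alternating monotone sequence that violates the conclusion of Theorem 3.

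For the statement about $a = \sup f(X) \in \mathbb R$, I argue by contradiction, assuming $a \notin Y$ and that $(a-\epsilon,a)\not\subseteq Y$ for every $\epsilon>0$. Equivalently, for every $b < a$ both sets $(b,a)\cap Y$ and $(b,a)\setminus Y$ are non-empty: the former because $a=\sup Y$ and $a\notin Y$, the latter because $(a-(a-b),a)\not\subseteq Y$. Using this, I inductively build a strictly increasing sequence $x_1<x_2<\cdots<a$ with $x_{2n-1}\notin Y$, $x_{2n}\in Y$, and $a-x_{2n}<1/n$: at each stage pick $x_{2n+1}\in(x_{2n},a)\setminus Y$ (non-empty by the observation above), then pick $x_{2n+2}\in Y$ with $x_{2n+2}>x_{2n+1}$ and $a-x_{2n+2}<1/(n+1)$, which is possible because $\sup Y=a>x_{2n+1}$. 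Then $x_{2n}\to a$, and since $x_{2n-1}\in(x_{2n-2},x_{2n})$, the full sequence converges to $a$. Theorem 3 forces $\lim x_n = a \in Y$, a contradiction. The case $a=\inf f(X)$ is handled by the mirror-image construction with a decreasing sequence.

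For the unboundedness part, I may assume without loss of generality that $Y$ is unbounded above yet contains no unbounded interval $(b,+\infty)$. Then for every $b \in \mathbb R$ both $(b,+\infty)\cap Y$ and $(b,+\infty)\setminus Y$ are non-empty. Mimicking the previous construction, I build a strictly increasing sequence with $x_{2n-1}\notin Y$, $x_{2n}\in Y$, and $x_{2n}>n$. This sequence diverges to $+\infty$, contradicting the convergence asserted by Theorem 3.

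The only slightly delicate point is the bookkeeping in the induction: one must simultaneously arrange the alternating membership pattern and the required limit behaviour. This is handled by choosing the even-indexed term $x_{2n}$ to pin down the limit (close to $a$, respectively larger than $n$) and then slotting $x_{2n+1}$ into the non-empty gap $(x_{2n},a)\setminus Y$ or $(x_{2n},+\infty)\setminus Y$ produced by the hypothesis being negated. Once the sequence is produced, Theorem 3 delivers the contradiction immediately.
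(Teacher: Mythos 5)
Your proof is correct and follows exactly the route the paper intends: the paper omits the proof, remarking only that an argument "similar" to Corollary~1 works, namely passing to the GCC space $f(X)$ and building an alternating monotone sequence that violates Theorem~3. Your bookkeeping of the induction (pinning the even terms to force the limit, slotting the odd terms into the non-empty complements) is sound in both the bounded and unbounded cases.
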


We will now prove that for every GCC subspace $X$ of $\mathbb R$ is continuous image of the product $C\times\mathbb R$, where $C$ is Cantor set. We begin with the following lemma.\\

\begin{lem} Let $X$ be a subspace of $\mathbb{R}$ and let $(C_i)_{i\in I}$ be the family of all connected components of $X$. Let $\varphi_X:(C_i)_{i\in I}\to 2^\mathbb R$ be a mapping that satisfies the following two conditions:\\
\indent 1) $C_i\setminus C_i^\circ\subseteq \varphi_X(C_i)\subseteq C_i$;\\
\indent 2) $C_i^\circ\neq\emptyset\Rightarrow \varphi_X(C_i)\cap C_i^\circ$ is a singleton.\\
Then $X$ is a GCC space if and only if $\bigcup_{i\in I}\varphi_X(C_i)$ is compact subspace of $\mathbb R$.

\begin{proof}  Let $Y=\bigcup_{i\in I} \varphi_X(C_i)$. If $Y$ is compact then $X$ is a GCC space because $Y$ intersects every component of $X$, hence $X$ is a GCC space.

Let $X$ be a GCC space. To prove that $Y$ is compact it is enough to prove that every strictly monotone sequence in $Y$ converges. Let $\{y_n\}_{n=1}^{+\infty}$ be a strictly increasing sequence in $Y$. Let $x_{2n}=y_{4n}$ for every $n\in\mathbb N$. We have that $y_{4n}\neq y_{4n+4}$. Since every $\varphi_X(C_i)$ contains at most 3 points we have that
\[C_{x_{2n}}=C_{y_{4n}}\neq C_{y_{4n+4}}=C_{x_{2n+2}},\]
where $C_x$ is a connected component of $x$ in $X$. Since elements $x_{2n}$ and $x_{2n+2}$ belong to different connected components of $X$ we have that $(x_{2n},x_{2n+2})\setminus X\neq\emptyset$. For every $n\in\mathbb N$ we pick $x_{2n+1}\in(x_{2n},x_{2n+2})\setminus X$. Now, by the theorem 3, we have that $\{x_n\}_{n=1}^{+\infty}$ converges to a point $x\in X$ and
\[\lim y_n=\lim y_{4n}=\lim x_{2n}=x.\]
\end{proof}
\end{lem}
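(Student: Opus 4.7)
My plan is to handle the two directions separately, invoking Theorem 2 for the easy direction and Theorem 3 for the hard one. Set $Y := \bigcup_{i \in I} \varphi_X(C_i)$. For the direction $(\Leftarrow)$, assuming $Y$ is compact, I would observe that condition 1 forces $Y \subseteq X$ and that $\varphi_X(C_i)$ is nonempty for every $i$: if $C_i^\circ \neq \emptyset$ this follows from condition 2, and if $C_i^\circ = \emptyset$ then $C_i$ is a singleton and condition 1 gives $\varphi_X(C_i) = C_i$. Thus $Y$ is a compact subspace of $X$ meeting every connected component, so $X$ is a CCC space by Theorem 2 and hence a GCC space by Proposition 2.

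For the direction $(\Rightarrow)$, I would show that $Y$ is sequentially compact in $\mathbb{R}$, which is equivalent to compactness. The quantitative fact powering the argument is that $|\varphi_X(C_i)| \leq 3$ for every $i$: components of $X \subseteq \mathbb{R}$ are intervals, so $C_i \setminus C_i^\circ$ contributes at most two points and condition 2 contributes at most one. Given any strictly monotone sequence $\{y_n\}$ in $Y$, without loss of generality increasing, I would set $x_{2n} := y_{4n}$ and argue that $x_{2n}$ and $x_{2n+2}$ must lie in distinct components of $X$: if they shared a component $C$, then since $C$ is an interval all five consecutive points $y_{4n}, y_{4n+1}, \ldots, y_{4n+4}$ would lie in $C$, hence in $\varphi_X(C)$, contradicting the $3$-point bound. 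Thus $(x_{2n}, x_{2n+2}) \setminus X \neq \emptyset$, and picking $x_{2n+1}$ from each gap produces the alternating monotone sequence required by Theorem 3, which forces $\{x_n\}$, and therefore $\{y_n\}$, to converge to a point $x \in X$.

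The main obstacle I foresee is promoting this limit from $X$ to $Y$, since Theorem 3 only delivers $x \in X$. For this I would rule out $x \in C^\circ$, where $C$ is the component of $x$ in $X$: such an interior point admits a neighborhood contained in $C$, forcing cofinitely many terms $y_{4n}$ into $C$ and so overflowing $\varphi_X(C)$ past three elements. Hence $x \in C \setminus C^\circ \subseteq \varphi_X(C) \subseteq Y$ by condition 1. Running the same construction against any hypothetical unbounded monotone sequence in $Y$ simultaneously establishes boundedness, and together sequential closedness and boundedness yield compactness of $Y$.
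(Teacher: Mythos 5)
Your proof is correct and follows essentially the same route as the paper's: the easy direction via the compact set meeting every component (Theorem 2 plus the fact that every CCC space is GCC), and the hard direction via the three-point bound on $\varphi_X(C_i)$, the interleaved sequence $x_{2n}=y_{4n}$ with gap points $x_{2n+1}\notin X$, and Theorem 3. You additionally make explicit that the limit lands in $C\setminus C^\circ\subseteq\varphi_X(C)\subseteq Y$ rather than merely in $X$, a point the paper's proof leaves implicit.
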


\begin{thm} Every GCC subspace of $\mathbb{R}$ is a continuous image of the space $C\times\mathbb{R}$, where $C$ is the Cantor set. 

\begin{proof} Let $X$ be a GCC subspace of $\mathbb{R}$ and let $(C_i)_{i\in I}$ be the family of connected components of $X$. Let $\varphi_X$ be a mapping as in the previous lemma and let $A=\bigcup_{i\in I}\varphi_X(C_i)$. The space $A$ is a compact subspace of $\mathbb{R}$ by the previous lemma. Since every compact subspace of $\mathbb{R}$ is a continuous image of the Cantor space it suffices to prove that $X$ is a continuous image of the space $A\times\mathbb{R}$. Let $f$ be a mapping from $A\times\mathbb{R}$ to $X$ defined by the following conditions:\\

(1) if $x\in \varphi_X(C_i)\cap C_i^\circ$ for some $i\in I$, then $f_{|\{x\}\times\mathbb{R}}(x,y)=g(y)$, for some continuous mapping $g$ from $\mathbb{R}$ onto $C_i$;

(2) if $x\in C_i\setminus C_i^\circ$ for some $i\in I$, then $f_{|\{x\}\times\mathbb{R}}(x,y)=x$ for every $y\in\mathbb{R}$.\\

It is easy to see that $f$ is a surjection. 

Let us prove that $f$ is continuous. Let $(x,y)\in A\times\mathbb{R}$. 

If $x\in \varphi_X(C_i)\cap C_i^\circ$ for some $i\in I$, then $x$ is an isolated point in $A$ and $f$ is continuous at $(x,y)$ because $\{x\}\times\mathbb{R}$ is an open neighborhood of $(x,y)$ in $A\times\mathbb{R}$ and $f_{|\{x\}\times\mathbb{R}}$ is continuous. 

Now let $x\in C_{i_0}\setminus C_{i_0}^\circ$ for some $i_0\in I$. Then we have that $f(x,y)=x$. Let $\epsilon>0$. 

Let 
\[J=\{j\in I\setminus\{i_0\}: x-\epsilon\in C_j\vee x+\epsilon\in C_j\}.\]
Let 
\[U=(x-\epsilon,x+\epsilon)\setminus\left((\varphi_X(C_{i_0})\cap C^\circ_{i_0})\cup\bigcup_{j\in J}C_j\right).\]
We note that the set $U$ is open since $|J|\leq 2$, and we also note that $x\in U$. 
Further, for every component $C_i\ (i\neq i_0)$ of $X$ we have 
\[C_i\cap U\neq\emptyset\Rightarrow C_i\subseteq U,\]
and we have that
\[f((U\cap \varphi_X(C_{i_0}))\times\mathbb R)\subseteq U.\]
Let $V=U\cap A$. We have that $V\times\mathbb R$ is an open neighborhood of $(x,y)$ in $A\times\mathbb R$ and that 
\[f(V\times\mathbb R)\subseteq U\subseteq (x-\epsilon,x+\epsilon).\]
Thus, $f$ is continuous at $(x,y)$.
\end{proof}
\end{thm}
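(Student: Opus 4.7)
The plan is to apply the preceding lemma to reduce the problem to constructing a continuous surjection out of $A\times\mathbb{R}$ for a suitably chosen compact $A\subseteq\mathbb{R}$. First, invoke the lemma with the choice function $\varphi_X$ described there: for each component $C_i$ of $X$ we select all points of $C_i\setminus C_i^\circ$ together with exactly one interior point whenever $C_i^\circ\neq\emptyset$. The lemma guarantees that $A:=\bigcup_i\varphi_X(C_i)$ is a compact subspace of $\mathbb{R}$. Since every nonempty compact subspace of $\mathbb{R}$ is a continuous image of the Cantor set, it is enough to build a continuous surjection $f:A\times\mathbb{R}\to X$ and precompose with the Cantor surjection onto $A$.

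To define $f$, split $A$ according to the role of each point inside its component. If $C_i^\circ\neq\emptyset$ and $x$ is the unique interior representative $\varphi_X(C_i)\cap C_i^\circ$, fix a continuous surjection $g_i:\mathbb{R}\to C_i$ (every nondegenerate interval admits one, by standard trigonometric constructions) and set $f(x,y)=g_i(y)$. Otherwise $x\in C_i\setminus C_i^\circ$, and we set $f(x,y)=x$ for every $y$. Surjectivity is immediate: the $g_i$ collectively cover the nontrivial components, and all other points of $X$ lie in $\bigcup_i(C_i\setminus C_i^\circ)\subseteq A$ and are hit as $f(x,0)=x$.

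The main obstacle is continuity. At an interior representative $x=x_i$, some open interval about $x$ sits inside $C_i^\circ$ and therefore meets $A$ only at $x$, so $\{x\}$ is open in $A$ and $f$ restricted to the open slab $\{x\}\times\mathbb{R}$ coincides with the continuous map $g_i$. The delicate case is $x\in C_{i_0}\setminus C_{i_0}^\circ$: then $f(x,y)=x$, and we need to keep the image of a neighborhood of $(x,y)$ inside a prescribed $(x-\epsilon,x+\epsilon)$. The danger is that a nearby interior representative $x_j\in A$ sends $\{x_j\}\times\mathbb{R}$ onto all of $C_j$, which might extend well outside that interval. The rescue comes from the order structure of $\mathbb{R}$: if a component $C_j$ meets $(x-\epsilon,x+\epsilon)$ without being contained in it, then $C_j$ spans across one of the endpoints $x\pm\epsilon$ and hence contains it; there are at most two such components besides $C_{i_0}$. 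Removing from $(x-\epsilon,x+\epsilon)$ the interior representative of $C_{i_0}$ (when $C_{i_0}^\circ\neq\emptyset$) and these at most two other components produces a set $U$ containing $x$ whose intersection with $A$ is open in $A$, since the finitely many points of $A$ being removed are either the isolated interior representatives or points that are omitted together with the entire component they belong to. By construction every interior representative in $U\cap A$ lies in a component entirely contained in $(x-\epsilon,x+\epsilon)$, so $f((U\cap A)\times\mathbb{R})\subseteq(x-\epsilon,x+\epsilon)$, which gives continuity at $(x,y)$ and completes the argument.
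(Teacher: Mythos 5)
Your proposal is correct and follows essentially the same route as the paper: the same compact set $A=\bigcup_i\varphi_X(C_i)$ from the lemma, the same map $f$ (constant on slices over boundary representatives, a surjection $\mathbb{R}\to C_i$ over interior representatives), and the same continuity argument via the set $U$ obtained by deleting the interior representative of $C_{i_0}$ and the at most two components straddling $x\pm\epsilon$. No substantive differences to report.
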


\end{document}